 \newtheorem{thm}{Theorem}[section]
 \newtheorem{lem}[thm]{Lemma}
 \theoremstyle{definition}
 \theoremstyle{remark}
\begin{document}

\title[Generalized derivations with central values on Lie ideals]
 { Generalized derivations with central values on Lie ideals}

\author{shervin sahebi, venus rahmani$^{*}$ }

\address{ $^{*}$ Department Of Mathematics, Islamic Azad University,
          Central Tehran Branch, 13185/768, Tehran, Iran.}

\email{$sahebi@iauctb.ac.ir, ven.rahmani.sci@iauctb.ac.ir$}

\thanks{}

\thanks{}
%\subjclass{20F28}

\keywords{generalized derivation, prime ring, Martindale quotient ring}

%\date{}

%\dedicatory{}

%\commby{}

%%% ----------------------------------------------------------------------

\begin{abstract}
 let $R$ be a prime ring of $char R \neq 2$, $H$ a
 generalized derivation and  $L$ a noncentral
 lie ideal of $R$. We show that if $l^s H(l) l^t\in Z(R)$
 for all $l\in L$, where $s,t\geq 0$  are fixed integers,
 then $H(x)=bx$ for some $b\in C$, the extended centroid
 of $R$, or $R$ satisfies $S_4$. Moreover, let $R$ be a
 $2$-torsion free semiprime ring, let $A=O(R)$ be an orthogonal
 completion of $R$ and $B=B(C)$  the Boolean ring of $C$.
 Suppose $([x_1,x_2]^sH([x_1,x_2])[x_1,x_2]^t\in Z(R)$ for
 all $x_1, x_2 \in R$, where $s,t\geq 0$ are fixed integers.
 Then there exists idempotent $e\in B$ such that $H(x)=bx$
 on $eA$ and the ring $(1-e)A$ satisfies $S_4$.\\ \\

 MSC: 16R50; 16N60; 16D60
\end{abstract}

%%% ----------------------------------------------------------------------
\maketitle
%%% ----------------------------------------------------------------------

\section{Introduction}
 Let $R$ be an associative ring with center $Z(R)$.
 Recall that an additive map $d: R\rightarrow R$
 is called  derivation if $d(xy)=d(x)y+xd(y)$,
 for all $x,y \in R$.
 Many results in literature indicate that global
 structure of a prime(semiprime) ring $R$ is
 often lightly connected to the behaviour of
 additive mappings defined on $R$.
 A well-known result of Herstein ~\cite{a06} stated that
 if $d$ is a nonzero derivation of a prime ring
 $R$ such that $d(x)^n\in Z(R)$ for all $x\in R$,
 then $R$ satisfies $S_4$, the standard identity
 in four variables.
 Herstein's result was extended to the case of
 Lie ideals of prime rings by Bergen and Carini~\cite{a001}.
 Some articles was studied derivation with
 central values on Lie ideals~\cite{a1, a999}.
 Recently, Dhara ~\cite{a22} studied the more
 generalized situation when $l^sd(l)l^t\in Z(R)$,
 for all $l\in L$, the noncentral Lie ideal of $R$,
 where $s, t \geq 0$
 are some fixed integers.

 \noindent
 Here we will consider the same situation in case
 the derivation $d$ is replaced by generalized derivation $H$.
 More specifically an additive map
 $H: R\rightarrow R$ is
 called  generalized derivation if there is
 a derivation $d$ of $R$  such that $H(xy)=H(x)y+xd(y)$,
 for all $x, y \in R$.

\noindent
 Throughout the paper we use the standard notation
 from \cite{a0}. In particular, we denote by  $Q$
 the two sided Martindale quotient of prime(semiprime) ring $R$
 and $C$ the center of $Q$. We call $C$ the extended centroid
 of $R$.\\

 \noindent
 The main results of this paper are as follows:
 \begin{thm}\label{f5}
 Let $R$ be a prime ring of $char R\neq 2$, $H$
 generalized derivation and $L$ a noncentral
 Lie ideal of $R$. Suppose $l^s H(l)l^t\in Z(R)$
 for all $l\in L$, where $s, t\geq 0$,
 are fixed integers. Then $H(x)=bx$ for
 some  $b\in C$, the extended centroid of $R$,
 or $R$ satisfies $S_4$.
 \end{thm}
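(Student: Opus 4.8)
The plan is to convert the Lie-ideal hypothesis into an identity on a two-sided ideal, apply the machinery of differential identities and generalized polynomial identities (GPIs), and then settle the residual primitive (matrix) case by direct computation.

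First, since $\operatorname{char} R\neq 2$ and $L$ is a noncentral Lie ideal, a standard structure theorem (Herstein; Lanski--Montgomery) provides a nonzero two-sided ideal $I$ of $R$ with $[I,R]\subseteq L$; in particular $[x,y]\in L$ for all $x,y\in I$, so the hypothesis becomes
\[
[x,y]^{s}\,H([x,y])\,[x,y]^{t}\in C \qquad (x,y\in I).
\]
By Lee's structure theorem for generalized derivations we write $H(x)=ax+d(x)$ with $a\in Q$ and $d$ an ordinary derivation of $Q$, so the relation reads
\[
[x,y]^{s}\bigl(a[x,y]+[d(x),y]+[x,d(y)]\bigr)[x,y]^{t}\in C \qquad (x,y\in I),
\]
and since $I$, $R$, $Q$ satisfy the same generalized differential identities this may be treated as an identity of $R$ (equivalently of $Q$).

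Next I split according to whether $d$ is $Q$-inner. If $d$ is \emph{outer}, Kharchenko's theorem lets me replace $d(x)$, $d(y)$ by new independent variables $u$, $v$; putting $u=v=0$ yields $[x,y]^{s}a[x,y]^{t+1}\in C$, and subtracting this leaves $[x,y]^{s}[u,y][x,y]^{t}\in C$, whence (take $u=x$) $[x,y]^{s+t+1}\in C$ for all $x,y\in R$. This is a nontrivial GPI, so by Martindale's theorem $R$ is GPI; extending scalars to $\overline{C}$ and computing inside $M_{k}(\overline{C})$, the choice $x=e_{12}$, $y=e_{21}$ gives $(e_{11}-e_{22})^{s+t+1}=e_{11}\pm e_{22}$, which is not a scalar matrix once $k\geq 3$. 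Hence $k\leq 2$, i.e. $R$ satisfies $S_{4}$, and that alternative of the theorem holds. So from now on $d$ is inner, say $d(x)=[q,x]$ with $q\in Q$; then $H(x)=px-xq$ with $p=a+q\in Q$, and the relation becomes the GPI
\[
[x,y]^{s}\bigl(p[x,y]-[x,y]q\bigr)[x,y]^{t}\in C \qquad (x,y\in R).
\]

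Now analyse this last relation, i.e. the GPI $F(x,y)z-zF(x,y)$ where $F(x,y)=[x,y]^{s}(p[x,y]-[x,y]q)[x,y]^{t}$. If it is a trivial GPI, a short computation in the free product $Q*_{C}C\{x,y\}$ forces $p=q\in C$, so $H(x)=px-xp=0=0\cdot x$ and we are done. Otherwise $R$ is GPI, so $RC$ is primitive with nonzero socle; if $R$ satisfies $S_{4}$ we are again done, and if not, then after extending scalars to $\overline{C}$ the socle contains $M_{k}(\overline{C})$-corners with $k\geq 3$, so it suffices to show that
\[
g^{s}\bigl(pg-gq\bigr)g^{t}\in\overline{C}\qquad\text{for every commutator } g=[x,y]\text{ in } M_{k}(\overline{C})
\]
forces $p,q\in\overline{C}$. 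Granting this, $H(x)=(p-q)x$ on $M_{k}(\overline{C})$, hence on $Q$ by density and linearity; moreover $p,q$ are then central in $Q$ (centrality of an element of $Q$ is detected after the field extension $C\hookrightarrow\overline{C}$), so with $b:=p-q\in C$ we obtain $H(x)=bx$.

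The main obstacle is this last step: one runs through the commutators realisable in $M_{k}(\overline{C})$ --- the matrix units $e_{ij}$ (so $g^{2}=0$), the elements $e_{ii}-e_{jj}$ and $e_{ij}+e_{ji}$ (so $g^{2}=e_{ii}+e_{jj}$ and all powers nonzero), diagonal traceless matrices with pairwise distinct entries, and higher-index nilpotents such as $e_{12}+e_{23}$ --- and reads off the off-diagonal and diagonal coefficients of $p$ and of $q$ from the ``$\in\overline{C}$'' constraint, so as to annihilate every non-scalar entry of $p$ and $q$. The bookkeeping genuinely depends on the residues of $s$ and $t$ modulo $2$ and on whether they are $0$, $1$ or $\geq 2$ (which controls how much of $F(x,y)$ survives for a given $g$), and the hypothesis $\operatorname{char} R\neq 2$ intervenes both in the Lie-ideal reduction and in dividing by $2$ during this bookkeeping. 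A secondary, routine point is checking that the identities produced in the outer case and the relation above are genuinely nontrivial GPIs, so that Martindale's theorem and the reduction to $M_{k}(\overline{C})$ with $k\geq 3$ actually apply; here one uses that $L$, hence $R$, is noncommutative.
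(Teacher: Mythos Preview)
Your sketch is correct and follows essentially the same route as the paper: pass from $L$ to commutators on a two-sided ideal, write $H(x)=ax+d(x)$ via Lee, split by Kharchenko into the outer and inner cases, and finish the inner case by a matrix computation in $M_k$ with $k\ge 3$. The step you flag as ``the main obstacle'' is precisely the content of the paper's Lemmas~\ref{f3} and~\ref{f4}, which test the commutators $[e_{11},e_{12}\mp e_{21}]$ (with the case split on $s,t$ you anticipate) and then use conjugation by $1+e_{1j}$ to force $p,q$ scalar; your outer-case reduction to the PI $[x,y]^{s+t+1}\in C$ and your explicit handling of the trivial-GPI subcase are mild streamlinings of what the paper does, but the overall architecture is the same.
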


 \noindent
 When $R$ is a semiprime ring, we prove:
 \begin{thm}\label{THM2}

 let $R$ be a $2$-torsion free semiprime ring with
 generalized derivation $H$.  Consider
 $[x_1,x_2]^s H([x_1,x_2]) [x_1,x_2]^t \in Z(R)$
 for all $x_1, x_2\in R$,
 where $s, t\geq 0$ are fixed integers.
 Further, let $A=O(R)$ be the orthogonal completion
 of $R$ and $B=B(C)$ where $C$ the extended centroid
 of $R$. Then there exists idempotent $e\in B$ such
 that $H(x)=bx$ on $eA$ and the
 ring $(1-e)A$ satisfies $S_4$.
\end{thm}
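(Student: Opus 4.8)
The plan is to obtain Theorem~\ref{THM2} from Theorem~\ref{f5} by the standard orthogonal-completion argument for semiprime rings developed in \cite{a0}. Recall that $A=O(R)$ is an orthogonally complete ring lying between $R$ and its two-sided Martindale quotient ring $Q$, that $B=B(C)$ is a complete Boolean algebra of central idempotents of $A$, and that the passage from $R$ to $A$ changes neither the extended centroid nor the generalized or differential identities in force. The first step is therefore to extend $H$, together with its associated derivation $d$, uniquely to a generalized derivation of $A$ and to observe that the hypothesis lifts:
\[
[x_1,x_2]^s\,H([x_1,x_2])\,[x_1,x_2]^t\in Z(A)\qquad\text{for all }x_1,x_2\in A.
\]

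The second step is to descend to prime factors. One works with the canonical family of prime ideals $P$ of $A$ attached to the orthogonal completion --- these are invariant under $H$ and $d$, satisfy $\bigcap P=0$, and exhibit $A$ as a subdirect product of the prime rings $\bar A:=A/P$; since $A$ is $2$-torsion free, each $\bar A$ has characteristic $\neq 2$. The map $H$ induces a generalized derivation $\bar H$ on $\bar A$ and the displayed identity descends to $\bar A$. If $[\bar A,\bar A]\subseteq Z(\bar A)$ then $\bar A$ is commutative and in particular satisfies $S_4$; otherwise $[\bar A,\bar A]$ is a noncentral Lie ideal of $\bar A$, so Theorem~\ref{f5} applies and yields the dichotomy: either $\bar H(\bar x)=\bar b\,\bar x$ for some $\bar b$ in the extended centroid of $\bar A$, or $\bar A$ satisfies $S_4$. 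In the first case, comparing $\bar H(\bar x\bar y)=\bar b\,\bar x\bar y$ with $\bar H(\bar x\bar y)=\bar H(\bar x)\bar y+\bar x\,\bar d(\bar y)$ gives $\bar x\,\bar d(\bar y)=0$ for all $\bar x,\bar y$, whence $\bar d=0$ and $\bar b$ is uniquely determined by $\bar H$.

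The third step is to reassemble. Let $\Sigma_1$ be the set of primes $P$ in the canonical family for which $\bar H$ is multiplication by an element of the extended centroid of $\bar A$, and let $\Sigma_2$ be its complement within the family; by the dichotomy above, $\bar A$ satisfies $S_4$ for every $P\in\Sigma_2$. By orthogonal completeness the subfamily $\Sigma_2$ is the trace of an idempotent $1-e\in B$, so that $(1-e)A$ is a subdirect product of rings satisfying $S_4$ and hence itself satisfies $S_4$, while $eA$ is a subdirect product of the prime rings $\bar A$ with $P\in\Sigma_1$. Since each multiplier $\bar b$ is uniquely determined, the family $\{\bar b\}_{P\in\Sigma_1}$ is compatible and patches --- using the completeness of $B=B(C)$ and the orthogonal completeness of $A$ --- to a single element $b\in C$ with $H(x)=bx$ for all $x\in eA$. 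This is exactly the assertion of the theorem.

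The one genuinely delicate point, as opposed to routine bookkeeping, is the patching in the third step: one must verify that $\Sigma_1$ and $\Sigma_2$ really cut out complementary idempotents of $B$, i.e.\ that the local dichotomy is ``Boolean-definable'', and that the locally defined multipliers $\bar b$ assemble into a bona fide element of the extended centroid $C$. Both facts rest on the orthogonal completeness of $O(R)$, the completeness of $B(C)$, and the uniqueness of $b$ forced by the relation $x\,d(y)=0$; this is precisely where the semiprime machinery, rather than Theorem~\ref{f5} on its own, is being used.
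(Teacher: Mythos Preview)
Your overall strategy matches the paper's: extend $H$ to the orthogonal completion, pass to prime quotients, invoke Theorem~\ref{f5} on each, and reassemble via Boolean idempotents. The difference lies in how the reassembly is executed.

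The paper does not argue with a family of $d$-invariant primes and a hand-built patching of local multipliers. Instead it invokes a single black-box result, Lemma~\ref{f6} (which is \cite[Theorem~3.2.18]{a0}), after recasting the problem in the $\Omega$-$\Delta$-ring and Horn/hereditary-formula language of \cite{a0}. Concretely, having written $H(x)=bx+d(x)$ with a \emph{fixed} $b\in Q$, the paper sets $\Psi_1=(\forall x)\,\|H(x)=bx\|$ and $\Psi_2=(\forall x_1,\ldots,x_4)\,\|S_4(x_1,\ldots,x_4)=0\|$, checks these are Horn formulas and that the hypothesis is hereditary, and then Lemma~\ref{f6} immediately produces orthogonal idempotents $e_1+e_2=1$ with $e_iA\models\Psi_i$. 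Two points make this cleaner than your route: (i) the relevant primes are the $A_M=A/AM$ for $M\in\mathrm{spec}(B)$, which are automatically $d$-invariant because $d(e)=0$ for $e\in B$ \cite[Theorem~3.1.16]{a0}, so you need not postulate a ``canonical family of $d$-invariant primes''; and (ii) since $b$ is fixed globally from the outset, on any prime quotient the first alternative of Theorem~\ref{f5} forces $\bar d=0$ and hence $\bar H(\bar x)=\bar b\bar x$ with the \emph{image of the global} $b$ --- so there is nothing to patch, and the ``delicate point'' you flag simply does not arise. Your argument is not wrong, but the paper's formulation sidesteps exactly the step you identified as the crux.
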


%%%%%%%%%%%%%%%%%%%%   MAIN RESULTS   %%%%%%%%%%%%%%%%%%%%%%%
\section{proof of the main results}
  \noindent
  The following results are useful tools
  needed in the proof of the main results.

  \begin{lem}\label{f0}
  Every generalized derivation $H$ on a dense right ideal of
  prime(semiprime) ring  $R$ can be uniquely extended to a
  generalized derivations of $Q$. Also  can be write in the form
  $H(x)=bx+d(x)$ for some $b\in Q$, all $x\in Q$ and  a derivation
  $d$ of $Q$~\cite{a10}.
  \end{lem}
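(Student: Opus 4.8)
The plan is to split $H$ into a ``left multiplier'' part and a ``derivation'' part, handling each by the standard machinery of the Martindale quotient ring. Write $I$ for the given dense right ideal on which $H$ is defined and $d$ for the derivation attached to $H$, so that $H(xy)=H(x)y+xd(y)$ for all $x,y\in I$. My first observation is that the associated derivation $d$ is itself recoverable from $H$: for fixed $y\in I$ the element $xd(y)=H(xy)-H(x)y$ is determined for every $x\in I$, and since $I$ is dense (hence faithful) this pins down $d(y)$. Thus $d$ is a well-defined derivation on $I$, and by the classical extension theorem for derivations of prime and semiprime rings it extends uniquely to a derivation of $Q$, which I continue to denote $d$.

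Next I would consider the additive map $g:=H-d$ on $I$. A direct computation, using the derivation rule $d(xy)=d(x)y+xd(y)$, gives for $x,y\in I$
\[
g(xy)=H(xy)-d(xy)=\bigl(H(x)y+xd(y)\bigr)-\bigl(d(x)y+xd(y)\bigr)=\bigl(H(x)-d(x)\bigr)y=g(x)y.
\]
Hence $g$ commutes with right multiplications, i.e. $g$ is a homomorphism of right $R$-modules from $I$ into $Q$. This is exactly the type of object the Martindale quotient ring is built to represent: every right $R$-module map from a dense right ideal into $Q$ is realized by left multiplication by a uniquely determined element of $Q$. Therefore there is $b\in Q$ with $g(x)=bx$ for all $x\in I$, and consequently $H(x)=bx+d(x)$ on $I$.

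With $b\in Q$ and $d$ already defined on all of $Q$, I would define the extension by $\tilde H(x)=bx+d(x)$ for every $x\in Q$. A one-line check shows $\tilde H$ is a generalized derivation of $Q$ with associated derivation $d$, since $\tilde H(xy)=bxy+d(x)y+xd(y)=\tilde H(x)y+xd(y)$, and $\tilde H$ agrees with $H$ on $I$ by construction. For uniqueness, suppose another generalized derivation of $Q$ agrees with $H$ on $I$; its associated derivation must coincide with $d$ on $I$ by the recovery argument above, hence on all of $Q$ by uniqueness of the derivation extension. Subtracting $d$ then reduces the matter to uniqueness of $b$, which follows because $bx=b'x$ on the dense, faithful ideal $I$ forces $b=b'$ in $Q$.

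The step I expect to carry the real weight is the passage in the middle paragraph from ``$g$ is a right-module homomorphism on a dense right ideal'' to ``$g$ is left multiplication by some $b\in Q$'': this is precisely the defining universal property of the Martindale quotient ring, and it is where the density of $I$ and the prime/semiprime hypothesis are genuinely used. In the semiprime case one must additionally verify that the relevant ideals stay dense and that $b$ is well defined across the whole ring rather than only on a local direct summand, but the structure of the argument is unchanged. Everything else---the recovery of $d$, the module computation, and the uniqueness bookkeeping---is routine once this representation theorem is in hand.
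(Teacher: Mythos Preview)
Your argument is correct and follows the standard route to this result (essentially the proof in Lee's paper \cite{a10} and in Hvala's work on generalized derivations): peel off the derivation $d$, observe that the remainder $g=H-d$ is a right $R$-module map on a dense right ideal, and invoke the defining property of $Q$ to realize $g$ as left multiplication by some $b\in Q$.

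The paper, however, does not prove this lemma at all. It is simply quoted from the literature with a citation to \cite{a10}; there is no argument in the paper to compare against. So your proposal is not merely consistent with the paper's approach---it supplies a proof where the paper gives none. If you intend to include a proof, what you have written is appropriate; if you are matching the paper's style, a one-line reference to \cite{a10} (or to Hvala) would suffice.
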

%%%%%%%%%%%%%%%%%%%%%%%%%%%%%%%%%%%%%%%%%%%%%%%%%%%%%%%%%%%%%%%%%%%%%
  \begin{lem}\label{f1}
  \emph{(see \cite[Lemma 2]{a9} and \cite[Lemma 1]{a01}).}
  Let $R$ be a prime ring  of $char R\neq 2$,
  $L$ be a noncentral Lie ideal
  of $R$  and $I$ be the ideal of $R$ generated by
  $[L,L]$. Then $I\subseteq L+L^2$ and $[I,I]\subseteq L$.
  \end{lem}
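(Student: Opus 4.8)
The plan is to deduce both statements from the classical structure theory of Lie ideals of prime rings (Herstein, Bergen--Herstein--Kerr, Lanski), which is exactly what the two cited lemmas package; so the genuine task is to set up its hypotheses correctly and to read off the two inclusions. Throughout, $L^2$ denotes the additive subgroup of $R$ generated by all products $\ell_1\ell_2$ with $\ell_1,\ell_2\in L$. I would first dispose of the trivial case in which $R$ is commutative, since then $L\subseteq Z(R)$, contrary to hypothesis.

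The elementary, formal part of the argument runs as follows. For each $r\in R$ the inner map $\delta_r\colon x\mapsto [x,r]$ is a derivation of $R$, so $\delta_r(xy)=\delta_r(x)y+x\delta_r(y)$; since $L$ is a Lie ideal, $\delta_r(L)\subseteq L$, whence $\delta_r(ab)=[a,r]b+a[b,r]\in L^2$ for all $a,b\in L$, and by additivity $[L^2,R]\subseteq L^2$. Therefore $L^2$, and with it $L+L^2$, is again a Lie ideal of $R$. Using this together with the trivial identity $xy=yx+[x,y]$, one can rewrite each generator $[a,b]$, $r[a,b]$, $[a,b]r$, $r[a,b]s$ of $I$ (with $a,b\in L$, $r,s\in R$): every time an $R$-factor is moved past an $L$-factor one pays a commutator $[\ell,r]\in L$, and every surviving pair of adjacent $L$-factors lands in $L^2$. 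This quickly yields congruences such as $r[a,b]\equiv[a,b]r$ both modulo $L$ and modulo $L^2$; but --- and this is where a purely formal treatment stalls --- those congruences turn out to be mutually circular and do \emph{not} by themselves place $r[a,b]$ inside $L+L^2$. Closing that gap requires primeness.

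Primeness enters through Herstein's theorem. Since $L\not\subseteq Z(R)$ and $R$ has characteristic $\neq 2$, one has $[L,L]\neq 0$: if $[L,L]\subseteq Z(R)$ then, $R$ being $2$-torsion free and prime, the standard centralizing sublemma forces $L\subseteq Z(R)$, a contradiction. Hence the ideal $I$ generated by $[L,L]$ is nonzero, and the structure theorem guarantees for this very $I$ both $[I,R]\subseteq L$ and the refinement $I\subseteq L+L^2$. The first inclusion gives the second assertion of the lemma at once, since $[I,I]\subseteq[I,R]\subseteq L$ (using $I\subseteq R$). I would quote these two inclusions from the cited lemmas rather than reprove them: a self-contained proof is the main obstacle, as it demands the full Herstein machinery --- the case split on whether $[L,L]\subseteq Z(R)$, the sublemmas on elements killed by iterated commutators, and an unavoidable appeal to primeness to eliminate the degenerate configurations, precisely the circularity that the formal computation above bumped into. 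With those two inclusions in hand, the lemma follows.
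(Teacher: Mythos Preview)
Your proposal is correct and aligns with the paper's treatment: the paper gives no proof of this lemma at all, simply citing \cite[Lemma~2]{a9} and \cite[Lemma~1]{a01}, which is exactly what you end up doing after your exploratory discussion. Your additional commentary on why the formal identities alone do not suffice and where primeness enters is sound and more informative than the paper's bare citation, but the substantive content is the same.
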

%%%%%%%%%%%%%%%%%%%%%%%%%%%%%%%%%%%%%%%%%%%%%%%%%%%%%%%%%%%%%%%%%%%%%%%
 \begin{thm}\label{f2}
 \emph{( Kharchenko ~\cite{a8})}.
 Let $R$ be a prime ring, $d$
 a nonzero derivation of $R$ and $I$ a nonzero ideal of $R$.
 If $I$ satisfies the differential identity

 \noindent
 $$f(r_{1},r_{2},\ldots,r_{n},d(r_{1}),d(r_{2}),\ldots,d(r_{n})) = 0,$$
 for any $r_{1},r_{2},\ldots,r_{n}\in I$, then one of the following holds:

 \noindent
\begin{enumerate}[label=(\roman{*})]
   \item  $I$ satisfies the generalized polynomial identity
   $$f(r_{1},r_{2},\ldots,r_{n},x_{1},x_{2},\ldots,x_{n}) = 0.$$

   \item $d$ is $Q$-inner, that is, for some $q\in Q$,
   $d(x) = [q,x]$ and $I$ satisfies the generalized polynomial identity

  \noindent
  $$f(r_{1},r_{2},\ldots,r_{n},[q,r_{1}],[q,r_{2}],\ldots,[q,r_{n}]) = 0.$$
  \end{enumerate}
  \end{thm}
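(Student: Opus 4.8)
The plan is to prove this by transferring everything to the Martindale quotient ring $Q$ and then splitting into two cases according to whether $d$ is $Q$-inner or outer. First I would extend $d$ to a derivation of $Q$, which is legitimate since a derivation of a prime ring extends uniquely to its quotient ring, and I would observe that because $I$ is a nonzero (hence dense) ideal, a differential identity valid on $I$ is equivalent to the same identity on $Q$; likewise a generalized polynomial identity holding on $I$ transfers to $Q$ and back. This lets me substitute arbitrary elements of $Q$ and reduces the whole problem to analyzing the single derivation $d$ on the prime ring $Q$ with extended centroid $C$.

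The inner case is immediate. If $d$ is $Q$-inner, there is $q\in Q$ with $d(x)=[q,x]$ for all $x$. Substituting $d(r_i)=[q,r_i]$ into the hypothesis
$$f(r_1,r_2,\ldots,r_n,d(r_1),d(r_2),\ldots,d(r_n))=0$$
yields exactly
$$f(r_1,r_2,\ldots,r_n,[q,r_1],[q,r_2],\ldots,[q,r_n])=0,$$
which is conclusion (ii). Nothing further is needed here.

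The substance of the theorem is the outer case, where I assume $d$ is not $Q$-inner and aim to show that the derivation slots behave as free variables, i.e.\ that conclusion (i) holds. The mechanism I would use is Kharchenko's independence principle for outer derivations: an outer derivation acts on $Q$ freely enough that the pair $(r,d(r))$ cannot satisfy any nontrivial constraint forcing $d(r)$ to depend generalized-polynomially on $r$. Concretely, I would fix the plain arguments $r_1,\ldots,r_n$ and regard $f$ as a generalized polynomial in the images $d(r_1),\ldots,d(r_n)$; then, by successively substituting products $r_i\mapsto r_iu_i$ and expanding through the Leibniz rule $d(r_iu_i)=d(r_i)u_i+r_id(u_i)$, I would manufacture, for each fixed tuple, enough distinct evaluations of the $d$-slots to isolate the coefficients of $f$ as a generalized polynomial in new independent indeterminates. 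A Vandermonde-type argument over $C$ then forces each coefficient to vanish identically, which is precisely the assertion that
$$f(r_1,r_2,\ldots,r_n,x_1,x_2,\ldots,x_n)=0$$
for all $r_i,x_i$, establishing (i).

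The main obstacle is making the freeness of the outer derivation rigorous, namely justifying that the evaluations of $(d(r_1),\ldots,d(r_n))$ produced by the substitutions $r_i\mapsto r_iu_i$ genuinely span enough of $Q^n$ (modulo the inner part) to separate every coefficient. This is exactly where outerness is indispensable: were $d$ inner, the extra substitutions would collapse back into the $r_i$-dependence and the separation would fail. Securing this independence rests on the structure theory of the prime quotient ring $Q$ together with a careful induction on the degree of $f$ in the derivation variables, and it is the technical heart on which the entire dichotomy rests.
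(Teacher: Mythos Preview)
The paper does not give its own proof of this statement: Theorem~\ref{f2} is quoted verbatim as Kharchenko's theorem with a citation to~\cite{a8}, and is used purely as a black box in the proof of Theorem~\ref{f5}. So there is no ``paper's proof'' to compare against; any argument you supply is going beyond what the authors do.

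That said, your outline tracks the standard architecture of Kharchenko's original argument: pass to $Q$, dispose of the inner case by direct substitution, and in the outer case argue that the values $d(r_i)$ are ``generically independent'' of the $r_i$. Your inner case is fine. In the outer case, however, the mechanism you describe---substitute $r_i\mapsto r_i u_i$, expand by Leibniz, and run a Vandermonde separation over $C$---is not by itself enough, and you seem to be aware of this when you call it the ``main obstacle.'' The difficulty is that the substitutions you propose produce values of the form $d(r_i)u_i + r_i d(u_i)$, which still live in the graph of $d$; showing that these span enough of $Q^n$ to kill every coefficient is precisely the content of Kharchenko's theorem, not a step one can discharge by a routine Vandermonde argument. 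The actual proof requires the machinery of the free differential $C$-algebra and a careful analysis of the kernel of the evaluation map for outer derivations (or, in later treatments, the reduction to $T$-ideals of differential polynomial identities). Without importing that machinery or citing it, the outer case of your sketch is circular: it assumes exactly the freeness it is meant to establish. If your intent is merely to record the dichotomy and cite~\cite{a8} for the outer case, that matches what the paper does; if you intend a self-contained proof, the outer case needs substantially more than what you have written.
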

%%%%%%%%%%%%%%%%%%%%%%%%%%%%%%%%%%%%%%%%%%%%%%%%%%%%%%%%%%%%%%%%%%%%%%%%%%%%%%%%

 \noindent
  We establish the following technical results required
  in the proof of Theorem \ref{f5}.
 %%%%%%%%%%%%%-------------------LEMMA---------------------------
\begin{lem}\label{f3}
  Let $R=M_k(F)$ be a ring  of all $k\times k$ matrices over a
  field $F$ where $k\geq 3$. Suppose $b[x_1,x_2]+[x_1,x_2]c\in Z(R)$
  for some $b, c \in R$ and all $x_1, x_2 \in R$.
  Then $b, c \in F\cdot I_{k}.$
 \end{lem}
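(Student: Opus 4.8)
The plan is to use the concrete structure of $R=M_k(F)$, namely that its center is exactly $F\cdot I_k$ and that, because $k\geq 3$, every off‑diagonal matrix unit $e_{ij}$ ($i\neq j$) is a commutator: picking an index $\ell\notin\{i,j\}$ gives $e_{ij}=[e_{i\ell},e_{\ell j}]$. Feeding $x_1=e_{i\ell}$, $x_2=e_{\ell j}$ into the hypothesis therefore yields
\[
be_{ij}+e_{ij}c\in F\cdot I_k\qquad\text{for all }i\neq j .
\]
The first step will be to show the associated central scalar is $0$. Writing $be_{ij}+e_{ij}c=\lambda I_k$, one notes that $be_{ij}$ is supported on the $j$‑th column (where it equals the $i$‑th column of $b$) and $e_{ij}c$ is supported on the $i$‑th row (where it equals the $j$‑th row of $c$); evaluating the $(p,p)$‑entry for any $p\notin\{i,j\}$ — such $p$ exists precisely because $k\geq 3$ — forces $\lambda=0$. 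Hence $be_{ij}+e_{ij}c=0$ for all $i\neq j$.

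Next I would simply read off entries of this identity. The $(p,j)$‑entry with $p\neq i$ gives $b_{pi}=0$, and letting $i$ (hence the column index) vary shows every off‑diagonal entry of $b$ vanishes, so $b$ is diagonal; symmetrically, the $(i,q)$‑entry with $q\neq j$ gives $c_{jq}=0$, so $c$ is diagonal. Finally the $(i,j)$‑entry yields the scalar relations $b_{ii}+c_{jj}=0$ for all $i\neq j$. Comparing these for two distinct excluded indices — again using $k\geq 3$ so that a third index is available — forces all diagonal entries of $b$ (resp.\ $c$) to coincide, say $b=\beta I_k$, and then $c=-\beta I_k$. In particular $b,c\in F\cdot I_k$, as required.

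All the computations involved are elementary manipulations with matrix units; the only genuinely load‑bearing ingredient is the hypothesis $k\geq 3$, which is used three times — to realize $e_{ij}$ as a commutator, to annihilate the central scalar $\lambda$, and to force the diagonal of $b$ (and of $c$) to be constant. I therefore expect no real obstacle beyond keeping the index bookkeeping straight; the rank restriction is essential, since for $k=2$ one has counterexamples (e.g.\ $b=c=e_{12}$, for which $be_{ij}+e_{ij}c$ lands in the center while $b,c\notin F\cdot I_2$).
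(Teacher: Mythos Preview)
Your argument is correct. The overall shape matches the paper's proof: realize each $e_{ij}$ ($i\neq j$) as a commutator, deduce $be_{ij}+e_{ij}c\in Z(R)$, argue this central element is actually $0$, and then read off entries. The differences are in the details. The paper uses $[e_{ii},e_{ij}]=e_{ij}$ rather than your $[e_{i\ell},e_{\ell j}]$, and kills the scalar by observing that $be_{ij}+e_{ij}c$ has rank at most $2<k$ and hence cannot be a nonzero scalar matrix; your direct check of the $(p,p)$-entry with $p\notin\{i,j\}$ does the same job. The more substantive divergence is at the final step: once $b$ and $c$ are known to be diagonal, the paper uses an automorphism trick---conjugating by $1+e_{1j}$ and using that $(1+e_{1j})b(1-e_{1j})$ must again be diagonal forces $b_{jj}=b_{11}$---whereas you extract the relations $b_{ii}+c_{jj}=0$ from the $(i,j)$-entry of $be_{ij}+e_{ij}c=0$ and compare them for varying $i,j$. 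Your route is a bit more elementary and yields the extra information $c=-b$; the paper's conjugation device is more conceptual and is reused verbatim in the proof of the next lemma. Your $k=2$ counterexample with $b=c=e_{12}$ is also valid.
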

 %%%%%%%%%%%%%%-------------------PROOF---------------------------
 \begin{proof}
 Let $b=(b_{ij})_{k\times k}$, $c=(c_{ij})_{k\times k}$.
 Putting $x_1=e_{11}$ and  $x_2=e_{12}$, we obtain
 $b[x_1,x_2]+[x_1,x_2]c=be_{12}+e_{12}c.$
 Since rank of $b[x_1,x_2]+[x_1,x_2]c\leq 2$, it can not
 be invertible. This implies $be_{12}+e_{12}c=0$.
 Left and right  multiplying  by $e_{12}$, we get
 $$\begin{array}{c}
  0=e_{12}(be_{12}+e_{12}c)=b_{21}e_{12},\\
  0=(be_{12}+e_{12}c)e_{12}=c_{21}e_{12}.
  \end{array}$$\\
  This implies that $c_{21}=b_{21}=0$.
  Thus for any $i\neq j $,
  $b_{ij}=c_{ij}=0$. That is, $b$ and $c$ are diagonal.
  Let $b=\sum_{i=1}^k b_{ii}e_{ii}$.
  For any $F$-automorphism
  $\theta$ of $R$ $b^{\theta}$ enjoys the same property as $b$
  does, namely, $b^{\theta}[x_1,x_2]+[x_1,x_2]c^{\theta}$ is
  zero or invertible, for every $x_1, x_2 \in R$. Hence $b^{\theta}$
  must be diagonal. Then for each $j\neq 1$,

\noindent
 $$(1+e_{1j})b(1-e_{1j})=\sum_{i=1}^k b_{ii}e_{ii}+(b_{jj}-b_{11})e_{1j},$$
 is diagonal. Therefore, $b_{jj}=b_{11}$ and so $b\in F\cdot I_k $.
 Similarly, we conclude $c\in F\cdot I_k$.
\end{proof}
 %%%%%%%%%%%%---------------LEMMA---------------------------------------------
 \begin{lem}\label{f4}
 Let $R=M_k(F)$ be a ring of all $k\times k$ matrices over a field $F$
 of  $char F\neq2$, where $k\geq 3$. Suppose
 $[x_1,x_2]^s(b[x_1,x_2]+[x_1,x_2]c)[x_1,x_2]^t\in Z(R)$,
 for some $b, c \in R$ and all $x_1, x_2\in R$ where
 $s, t\geq 0$ are fixed integers
 such that $s+t\neq 0$. Then $b, c\in F\cdot I_k$.
\end{lem}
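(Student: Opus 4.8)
The plan is to reduce to Lemma 2.7 by showing that the hypothesis forces $b[x_1,x_2]+[x_1,x_2]c$ to lie in the center, after which Lemma 2.7 finishes the job. Write $u=[x_1,x_2]$ and $v=b u+uc$; the hypothesis reads $u^s v u^t\in Z(R)=F\cdot I_k$ for all choices of $x_1,x_2$. First I would exploit the freedom to choose $x_1,x_2$ so that $u$ is a fixed nilpotent matrix of a convenient form, e.g.\ $u=e_{12}$ (take $x_1=e_{11}$, $x_2=e_{12}$); more generally any $e_{ij}$ with $i\neq j$ arises as a commutator, and so do sums like $e_{12}+e_{23}$. For such nilpotent $u$ with $u^2=0$, if $s+t\geq 2$ then $u^s v u^t=0$ automatically and gives nothing, so the useful cases are $(s,t)\in\{(1,0),(0,1),(1,1)\}$ together with nilpotents of higher index; I will organize the argument by cases on whether $s=0$, $t=0$, or both are positive.

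The key step is the case $s,t\geq 1$. Here I take $u$ to be a single nilpotent Jordan block, say $u=e_{12}+e_{23}+\cdots+e_{k-1,k}$, which is a commutator (it is $[x_1,x_2]$ for suitable $x_1,x_2$, e.g.\ $x_1=\mathrm{diag}(k,k-1,\dots,1)$ scaled appropriately and $x_2=\sum e_{i,i+1}$, using $char F\neq 2$ and $k\ge 3$ to guarantee the scalars behave). Then $u^s v u^t=u^s(bu+uc)u^t=u^s b u^{t+1}+u^{s+1}c u^t$, and since $u^{s}Vu^{t}\in F\cdot I_k$ while $u^s b u^{t+1}$ and $u^{s+1}c u^t$ are each strictly upper triangular (products with at least one extra factor of $u$), their sum is strictly upper triangular; a strictly upper triangular scalar matrix is $0$. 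So $u^s b u^{t+1}=-u^{s+1}c u^t$ for every such $u$. Conjugating $u$ by the diagonal group and by permutation matrices (both are realized by $F$-automorphisms of $R$, under which $b,c$ are replaced by $b^\theta,c^\theta$ enjoying the same identity) lets me read off, entry by entry, that $b$ and $c$ must be diagonal and then that all diagonal entries coincide — exactly as in the last paragraph of Lemma 2.7, where $(1+e_{1j})b(1-e_{1j})$ is forced to stay diagonal. I would also handle the degenerate subcase where one of $s,t$ is zero (say $t=0$, $s\geq 1$): then $u^s(bu+uc)\in F\cdot I_k$, and taking $u$ nilpotent of index $>s$ makes $u^s(bu+uc)$ nilpotent hence $0$, giving $u^s b u=-u^{s+1}c$, and the same conjugation argument applies; the case $s=0$, $t\geq 1$ is symmetric.

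Once $b$ and $c$ are known to be diagonal with equal diagonal entries — equivalently $b=\beta I_k$, $c=\gamma I_k$ — they are in particular in $F\cdot I_k$, which is the conclusion; alternatively, with $b,c$ central the original expression becomes $(\beta+\gamma)u^{s+1+t}\in Z(R)$, which is consistent and imposes no further constraint, so no contradiction arises and the lemma is proved. The main obstacle I anticipate is purely bookkeeping: making sure that every nilpotent matrix I want to plug in is genuinely a commutator $[x_1,x_2]$ in $M_k(F)$ (this is classical — every trace-zero matrix is a commutator, and nilpotents have trace zero — but I should cite or reprove it), and then carefully running the entry-chasing under the diagonal and permutation automorphisms to pin down all off-diagonal and diagonal entries without missing a case when $s$ or $t$ is $0$ or when $s+t$ exceeds the nilpotency index available for a given $k$. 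The hypothesis $k\geq 3$ is what gives enough room (Jordan blocks of size up to $k$) to separate the diagonal entries, and $char F\neq 2$ is needed so that the commutators producing the desired $u$ and the identity $(1+e_{1j})b(1-e_{1j})$ manipulation are valid.
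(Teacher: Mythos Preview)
Your approach has a genuine gap that is not mere bookkeeping. You work exclusively with nilpotent commutators $u$, but in $M_k(F)$ every nilpotent matrix satisfies $u^k=0$. The integers $s,t$ are fixed independently of $k$, so whenever $s\geq k$ (or $t\geq k$) every nilpotent choice of $u$ gives $u^s=0$ (resp.\ $u^t=0$), and the hypothesis $u^s(bu+uc)u^t\in Z(R)$ degenerates to $0\in Z(R)$, yielding no information about $b$ or $c$. You do flag ``when $s+t$ exceeds the nilpotency index available for a given $k$'' as a concern, but it is fatal to the nilpotent strategy, not a bookkeeping detail. There is also a local error: for the full Jordan block $u$ and arbitrary $b$, the matrix $u^{s}b\,u^{t+1}$ is \emph{not} strictly upper triangular in general---take $k=4$, $s=t=1$, $b=e_{41}$ and compute $u\,b\,u^{2}=e_{33}$. (The conclusion $u^{s}(bu+uc)u^{t}=0$ is still salvageable there, but via rank: $\mathrm{rank}\,u^{s}<k$, so a scalar matrix lying in its image is zero. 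The same remark applies to your ``nilpotent hence $0$'' step when $t=0$: the product $u^{s}(bu+uc)$ is not obviously nilpotent, though it is singular.)

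The paper sidesteps the vanishing problem by using \emph{non-nilpotent} commutators. With $x_{1}=e_{11}$ and $x_{2}=e_{12}\mp e_{21}$ one gets $u=e_{12}\pm e_{21}$, whose powers cycle among $\pm u$ and $\pm(e_{11}+e_{22})$ and never vanish. Then $u^{s}(bu+uc)u^{t}$ is supported in the first two rows or the first two columns, hence has rank at most $2<k$ and is therefore zero for \emph{all} $s,t$. The two sign choices produce, after a short case analysis, the relations $b_{ij}=-c_{ji}$ and $b_{ij}=c_{ji}$ for $i\neq j$; since $\mathrm{char}\,F\neq 2$ this forces all off-diagonal entries of $b$ and $c$ to vanish, and the automorphism argument of Lemma~\ref{f3} finishes. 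To repair your proof for large $s,t$ you would have to switch to such periodic test commutators, which essentially reproduces the paper's route.
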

%%%%%%%%%%%---------------PROOF-------------------------------------------------
\begin{proof}
 Let $b=(b_{ij})_{k\times k}$, $c=(c_{ij})_{k\times k}$ and set

\noindent
 $$f(x_1,x_2)=[x_1,x_2]^s(b[x_1,x_2]+[x_1,x_2]c)[x_1,x_2]^t.$$

\noindent
 Putting $x_1=e_{11}$, $x_2=e_{12}-e_{21}$, we obtain
 $[x_1,x_2]=e_{12}+e_{21}$  and $[x_1,x_2]^n=e_{11}+e_{22}$
 for $n\geq 2$.
 So we have four cases:\\

\noindent
\emph{ Case 1}. $s=t=1$. We get
 $$f(x_1,x_2)=(b_{21}+c_{12})e_{11}+ (b_{12}+c_{21})e_{22}+(b_{22}+c_{11})e_{12}+(b_{11}+c_{22})e_{21}.$$

 \noindent
 \emph{Case 2}. $s=0$ and $t=1$. We get
 $$f(x_1,x_2)=(b_{11}+c_{22})e_{11}+(b_{22}+c_{11})e_{22}+(b_{12}+c_{21})e_{12}+ (b_{21}+c_{12})e_{21}+\sum_{i=3}^{k}b_{i1}e_{i1}+\sum_{i=3}^{k}b_{i2}e_{i2}.$$

 \noindent
 \emph{Case 3}. $s=1$ and $t=0$. We get
 $$f(x_1,x_2)=(b_{22}+c_{11})e_{11}+(b_{11}+c_{22})e_{22}+(b_{21}+c_{12})e_{12}+(b_{12}+c_{21})e_{21}+
\sum_{i=3}^{k}c_{1i}e_{1i}+\sum_{i=3}^{k}c_{2i}e_{2i}.$$

 \noindent
 \emph{Case 4}. $s, t\geq 2$. We obtain
 $$f(x_1,x_2)=(b_{12}+c_{21})e_{11}+(b_{21}+c_{12})e_{22}+(b_{11}+c_{22})e_{12}+(b_{22}+c_{11})e_{21}.$$

 \noindent
 In each cases, since rank of  $f(x_1,x_2)$ $\leq 2$,  $f(x_1,x_2)=0$.
 Thus
 $$\begin{array}{ccc}
  b_{12}=-c_{21} & and & b_{21}=-c_{12},
 \end{array}$$

\noindent
and so for any $i\neq j$ we have
\begin{equation}\label{d1}
b_{ij}=-c_{ji}.
\end{equation}

\noindent
 Now putting $x_1=e_{11}$, $x_2=e_{12}+e_{21}$,
 we have  $[x_1,x_2]^n=(-1)^{n/2}(e_{11}+e_{22})$
 if $n$ is even and
 $(-1)^{(n-1)/2}(e_{12}-e_{21})$ if $n$ is odd.
 Four cases may be occured:\\

 \noindent
\emph{ Case 1}. s and t are even. We get
 $$f(x_1,x_2)=\pm((-b_{12}+c_{21})e_{11}+(b_{21}-c_{12})e_{22}+(b_{11}+c_{22})e_{12}+(-b_{22}-c_{11})e_{21}).$$

 \noindent
\emph{ Case 2}. s and t are odd. We get
 $$f(x_1,x_2)=\pm((-b_{21}+c_{12})e_{11}+(b_{12}-c_{21})e_{22}+(-b_{22}-c_{11})e_{12}+(b_{11}+c_{22})e_{21}).$$

 \noindent
 \emph{Case 3}. s is even and t is odd. We get
 $$f(x_1,x_2)=\pm((-b_{11}-c_{22})e_{11}+(-b_{22}-c_{11})e_{22}+(-b_{12}+c_{21})e_{12}+(-b_{21}+c_{12})e_{21}).$$

 \noindent
 \emph{Case 4}. s is odd and t is even. We get
 $$f(x_1,x_2)=\pm((-b_{22}-c_{11})e_{11}+(-b_{11}-c_{22})e_{22}+(b_{21}-c_{12})e_{12}+(b_{12}-c_{21})e_{21}).$$

 \noindent
 In each cases, since rank of $f(x_1,x_2)$  $\leq 2$,
 $f(x_1,x_2)=0$. Thus
 $$\begin{array}{ccc}
  b_{12}=c_{21} & and & b_{21}=c_{12},
 \end{array}$$

\noindent
 and so for any $i\neq j$ we have
\begin{equation}\label{d2}
 b_{ij}=c_{ji}.
\end{equation}

\noindent
(\ref{d1}) and (\ref{d2}) imply that $b$ and $c$ are diagonal.
 So we apply the same argument used in the proof of Lemma~\ref{f3}.
 Hence $b, c\in F\cdot I_k$.
 \end{proof}
 %%%%%%--------------PROOF------------------------------------------

\smallskip
\noindent
 Now we can prove Theorem~\ref{f5}.

\vspace{3mm}
 \emph{Proof of Theorem~\ref{f5}.} Since $char R\neq 2$ and $L$ is noncentral Lie ideal,
 by Lemma ~\ref{f1} there exists an ideal $I$ of $R$ such that
 $0\neq [I,I]\subseteq L$ and $[L,L]\neq 0.$
 Hence, without loss of generality, we may assume $L=[I,I]$.
 Thus $I$ satisfies the generalized differential identity
 $$[x_1,x_2]^s H([x_1,x_2])[x_1,x_2]^t\in Z(R).$$

 \noindent
 Let $Q$ be the two sided  Martindale quotient ring and $C$ the extended
 centroid of $R$. By~\cite{a10} $I$ and $Q$  satisfy the same
 differential identities, thus we may assume
 $$[x_1,x_2]^sH ([x_1,x_2]) [x_1,x_2]^t\in Z(R),$$

 \noindent
 for all $x_1, x_2\in Q$. By Lemma \ref{f0} we may assume
 $H(x)=bx+d(x)$ for some $b\in Q$, all $x \in Q$
 and  $d$ a derivation of $Q$. Hence $Q$ satisfies
 $$[x_1,x_2]^s(b[x_1,x_2]+d([x_1,x_2])) [x_1,x_2]^t\in Z(R).$$
 This is a polynomial identity. Hence there exists a field
 $F$ such that $Q\subseteq M_k(F)$, the ring of $k\times k$ matrices over field $F$,
 where $k>1$. Moreover $Q$ and
 $M_k(F)$ satisfy the same polynomial identity
 \cite{a99}. Hence we have
 \begin{equation}\label{d4}
 [x_1,x_2]^s(b[x_1,x_2]+d([x_1,x_2])) [x_1,x_2]^t\in Z(M_k(F)).
 \end{equation}
  Now consider  two cases.\\

  \emph{case 1}. $d$ is a $Q$-inner derivation.
  In this case, there exists  an element $p\in Q$ such that
  $d(x)=[p,x]$ for all $x\in M_k(F)$, then (\ref{d4})
  becomes
  $$[x_1,x_2]^s(b[x_1,x_2]+[p,[x_1,x_2]])[x_1,x_2]^t\in Z(M_k(F)).$$
  So
  $$[x_1,x_2]^s((b+p)[x_1,x_2]-[x_1,x_2]p)[x_1,x_2]^t\in Z(M_k(F)),$$
  for all $x_1, x_2\in M_k(F)$.
  In this case if $k\geq 3$ and $s=t=0$, then by Lemma \ref{f3}
  we have  $-p, b+p \in F\cdot I_k$.
  Also for $k\geq3$ and $s+t\neq 0$,  Lemma \ref{f4} implies
  $-p, b+p \in F\cdot I_k$. Then $b\in F\cdot I_k$, and so
  $d(x)=0$.
  Hence $H(x)=bx$ for all $x\in M_k(F)$.
  So by ~\cite{a99} for all $x\in R$ we have $H(x)=bx$.
  If $k=2$, then $R$ satisfies $S_4$.\\

 \emph{case 2}. $d$ is not a $Q$-inner derivation.
  In this case we have
  $$[[x_1,x_2]^s(b[x_1,x_2]+d([x_1,x_2])) [x_1,x_2]^t,x_3]=0,$$
  for all $x_1, x_2, x_3\in M_k(F)$.

 \noindent
  Then by Theorem \ref{f2} we have
  $$[[x_1,x_2]^s (b[x_1,x_2]+[x_4,x_2]+[x_1,x_5]) [x_1,x_2]^t,x_3]=0, $$
  for all $x_1, x_2, x_3, x_4, x_5 \in M_k(F)$.
  In particular, $M_k(F)$ satisfies its blended component
  $$[[x_1,x_2]^s ([x_4,x_2]+[x_1,x_5]) [x_1,x_2]^t,x_3]=0.$$

 \noindent
 If $k\geq 3$, then by choosing
 $$\begin{array}{ccccc}
    x_1=e_{ij}, & x_2=e_{ji}, & x_3=e_{ik}, & x_4=e_{ij}, & x_5=0,
 \end{array}$$

 \noindent
 for all $i\neq j\neq k$, we get
 $$0=[[x_1,x_2]^s ([x_4,x_2]+[x_1,x_5]) [x_1,x_2]^t,x_3]=e_{ik},$$
 which is a contradiction.
 Thus  $k=2$, that is, $R$ satisfies $S_4$.\hfill $\Box$\\

 Now let $R$ be a semiprime orthogonally complete ring with
 extended centeroid $C$. The notations $B=B(C)$
 and $spec(B)$ denotes Boolian ring of $C$ and the
 set of all maximal ideal of $B$, respectively.
 It is well known that if $M\in spec(B)$ then $R_{M}=R/RM$
 is prime ~\cite[Theorem 3.2.7]{a0}. We use the notations
 $\Omega$-$\Delta$-ring,
 Horn formulas and Hereditary formulas. We
 refer the reader to ~\cite[ pages 37, 38, 43, 120]{a0}
 for the definitions and the related properties
 of these objects.\\

\noindent
 We establish the following technical result required
 in the proof of Theorem \ref{THM2}.

%%%%%%%%%%--------------LEMMA-------------------------------------------
\begin{lem}\label{f6}
 \cite[Theorem 3.2.18]{a0}. Let $R$ be an orthogonally complete
 $\Omega$-$\Delta$-ring with extended centroid $C$,
 $\Psi_{i} ( x_{1}, x_{2},\ldots, x_{n})$ Horn formulas of signature
 $\Omega$-$\Delta$, $i=1,2,\ldots$ and
 $\Phi(y_{1}, y_{2},\ldots,y_{m})$
 a Hereditary first order formula such that $\neg\Phi$ is
 a Horn formula.
 Further, let
 $\vec{a}= (a_{1}, a_{2},\ldots,a_{n})\in R^{(n)},$
 $\vec{c} = (c_{1}, c_{2},\ldots, c_{m})\in R^{(m)}.$
 Suppose  $R\models \Phi (\vec{c})$ and  for
 every $M\in spec (B)$ there exists a natural number
 $i=i(M)>0$ such that
 $$R_{M} \models \Phi (\phi_{M} (\vec{c})) \Longrightarrow \Psi_{i}(\phi_{M}(\vec{a})),$$\\
  where $\phi_{M}: R\rightarrow R_{M} = R/RM$ is the
  canonical projection. Then there exists a natural number
  $k > 0$ and pairwise orthogonal idempotents
  $e_{1}, e_{2},\ldots,e_{k}\in B$ such that
  $e_{1} + e_{2} + \ldots + e_{k} = 1$ and $e_{i}R\models\Psi_{i}(e_{i}\vec{a})$
  for all $e_{i}\neq 0$.
 \end{lem}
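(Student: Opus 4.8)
The plan is to read this as a local--global (sheaf-theoretic) statement over the Stone space $spec(B)$, using that the stalks $R_M = R/RM$ reconstruct $R$ through its orthogonal completeness. Three ingredients have to be assembled: heredity of $\Phi$ to force the hypothesis into every stalk, the fact that the truth-locus of a Horn formula is open in $spec(B)$, and compactness of $spec(B)$ together with a gluing step that returns to the idempotents $e_i$. The hypothesis that $\neg\Phi$ is Horn is what keeps the compound implication $\Phi \Rightarrow \Psi_i$ inside the class of formulas to which the preservation machinery for $\Omega$-$\Delta$-rings applies.

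First I would dispose of $\Phi$. Since $R \models \Phi(\vec c)$ and $\Phi$ is hereditary, its validity is inherited by every canonical image, so $R_M \models \Phi(\phi_M(\vec c))$ for all $M \in spec(B)$. Feeding this into the standing hypothesis, for each $M$ the chosen implication fires and yields
$$R_M \models \Psi_{i(M)}(\phi_M(\vec a)), \qquad M \in spec(B).$$
Thus to every point $M$ there is attached an index $i = i(M) > 0$ whose Horn formula holds in the corresponding stalk, and it remains to convert this pointwise information into a single finite partition.

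Next I would turn the pointwise statement into an open covering. For a fixed index $i$ set
$$V_i = \set{M \in spec(B) : R_M \models \Psi_i(\phi_M(\vec a))}.$$
Because each stalk $R_M$ is a direct limit of the rings $eR$ taken over the idempotents $e \in B$ with $e \notin M$, and Horn formulas are preserved along, and reflected down, such filtered systems, membership $M \in V_i$ already forces $eR \models \Psi_i(e\vec a)$ for some idempotent $e = e(M,i) \notin M$; this exhibits $V_i$ as open in the Stone topology of $spec(B)$. By the previous step $spec(B) = \bigcup_i V_i$ is an open cover of the compact space $spec(B)$, so finitely many indices $i_1, \dots, i_r$ already cover it. Using that $spec(B)$ is Boolean (totally disconnected, compact, Hausdorff), I would then shrink this subcover to a partition $spec(B) = \bigsqcup_{j=1}^k W_j$ into pairwise disjoint clopen sets with each $W_j$ contained in some $V_{i(j)}$; the $W_j$ correspond to pairwise orthogonal idempotents $e_1, \dots, e_k \in B$ with $e_1 + \cdots + e_k = 1$.

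Finally, on each $W_j$ the Horn formula $\Psi_{i(j)}$ holds in every stalk, and here I invoke orthogonal completeness: a Horn formula valid in all stalks over a clopen piece is valid in the corresponding ring of sections $e_j R$, giving $e_j R \models \Psi_{i(j)}(e_j \vec a)$. Relabelling so that the surviving index on $W_j$ is $i = j$ and discarding the pieces with $e_j = 0$ delivers the asserted $e_i R \models \Psi_i(e_i \vec a)$ for all $e_i \neq 0$. I would expect the genuine difficulty to be exactly this gluing step, the passage from ``$\Psi_i$ holds in every stalk over $W_j$'' back to ``$\Psi_i$ holds in $e_j R$''. This is where orthogonal completeness of the $\Omega$-$\Delta$-ring is indispensable and where the Horn shape of $\Psi_i$, through its closure under the relevant reduced products, is used in an essential way; by contrast the openness of the $V_i$ and the topological bookkeeping are comparatively routine once the preservation properties of Horn formulas are in hand.
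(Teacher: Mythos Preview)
The paper does not prove this lemma at all: it is stated as a direct citation of \cite[Theorem~3.2.18]{a0} and is used as a black box in the proof of Theorem~\ref{THM2}. There is therefore no proof in the paper to compare your proposal against.

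That said, your sketch is a reasonable outline of the standard argument behind this result in the Beidar--Martindale--Mikhalev framework: push $\Phi$ down to all stalks by heredity, use the Horn property to get open truth-loci $V_i$ on the Stone space $spec(B)$, extract a finite clopen partition by compactness and total disconnectedness, and then lift each $\Psi_i$ back from the stalks to $e_iR$ using orthogonal completeness and the preservation of Horn formulas under the relevant products. The one place where your write-up is slightly loose is the relabelling at the end: the conclusion of the lemma asserts $e_iR \models \Psi_i(e_i\vec a)$ with the \emph{same} index $i$ on both sides, so after obtaining $e_jR \models \Psi_{i(j)}$ you need to reindex so that the $j$-th idempotent is assigned to the formula $\Psi_j$, possibly inserting zero idempotents for unused indices; this is harmless but should be said explicitly. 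Apart from that bookkeeping, your identification of the gluing step as the substantive point is correct.
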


 \noindent
 we denote $O(R)$ the orthogonal completion of $R$
 which is defined as the intersection of all
 orthogonally complete subset of $Q$ containing $R$.
%%%%%%%%%---------------------PROOF----------------------------------

\smallskip
\noindent
 Now we can prove Theorem~\ref{THM2}.

\vspace{3mm}
\emph{Proof of Theorem~\ref{THM2}.}
 By  assumption we have $R$  satisfies
 $$[[x_1,x_2]^sH([x_1,x_2]) [x_1,x_2]^t, x_3]=0. $$
 By Lemma ~\ref{f0} the generalized derivation $H$
 can be extended uniquely to the generalized derivation on $Q$,
 moreover, we  may assume
 $H([x_1,x_2])=b[x_1,x_2]+d([x_1,x_2])$,
 for some $b\in Q$, all $x_1, x_2 \in Q$ and
  $d$ a derivation of $Q$. Hence $Q$ satisfies
 $$[[x_1,x_2]^s(b([x_1,x_2]+d([x_1,x_2])) [x_1,x_2]^t, x_3]=0.$$

 \noindent
 According to \cite[Theorem 3.1.16]{a0} $d(A)\subseteq A$
 and $d(e)=0$ for all $e\in B.$ Therefore, $A$ is an orthogonally
 complete $\Omega$-$\Delta$-ring, where $\Omega= \{o, +, -, \cdot, d \}$.
 Consider formulas\\

 $$\begin{array}{l}
 \Phi =  (\forall x_1 )(\forall x_2 )  \| [[x_1,x_2]^s(b[x_1,x_2]+d([x_1,x_2]) [x_1,x_2]^t, x_3]=0 \|,\\ \\
 \Psi_{1} = (\forall x )  \| H(x)=bx\|,\\ \\
 \Psi_{2} = (\forall x_1)(\forall x_2)(\forall x_3)( \forall x_4 ) \| S_4(x_1, x_2, x_3, x_4)=0 \|.
 \end{array}$$\\ \\
 We can easily check that $\Phi $ is a hereditary first order
 formula and $\neg \Phi$, $\Psi_{1}$, $\Psi_{2}$ are Horn formulas.
 So using Theorem ~\ref{f5}, all conditions of Lemma ~\ref{f6}
 are fulfilled. Hence there exist two orthogonal
 idempotents $e_{1}$ and $e_{2}$ such that $e_{1} + e_{2} = 1.$
 If $e_{i} \neq 0$, then $e_{i}A \models \Psi_{i},$ $i = 1, 2.$
 This complete the proof.\hfill $\Box$

%%%%%%%%%------------------BIBLIOGRAPHY-------------------------------

 \end{document}